\documentclass[12pt]{amsart}

\usepackage{amsmath}
\usepackage{amssymb}
\usepackage{pifont}
\usepackage{epsfig}

\usepackage{amscd}
\usepackage{latexsym}
\usepackage{amsfonts}

\usepackage{framed}

\setlength{\textwidth}{6.5truein} \setlength{\textheight}{9truein}
\setlength{\oddsidemargin}{-0.0in}
\setlength{\evensidemargin}{-0.0in}
\setlength{\topmargin}{-0.0truein}

\newtheorem{thm}{Theorem}[section]
\newtheorem{lem}[thm]{Lemma}
\newtheorem{cor}[thm]{Corollary}

\newtheorem{defi}[thm]{Definition}

\usepackage{graphicx}
\ifx\pdfoutput\undefined
  \DeclareGraphicsRule{*}{eps}{*}{}
\else
  \DeclareGraphicsRule{*}{mps}{*}{}
\fi
\graphicspath{{images/}}

\usepackage{color}

\newcommand\spt{{\rm spt}}
\newcommand\dive{{\rm div}}
\newcommand\tang{{\rm Tan}}
\newcommand\dist{{\rm dist}}

\numberwithin{equation}{section}

\begin{document}

\title[A fixed contact angle condition for varifolds]
{ A fixed contact angle condition \\ for varifolds}

\author{Takashi Kagaya}
\address{Department of Mathematics, Tokyo Institute of Technology,
152-8551, Tokyo, Japan}
\email{kagaya.t.aa@m.titech.ac.jp}

\author{Yoshihiro Tonegawa}
\address{Department of Mathematics, Tokyo Institute of Technology,
152-8551, Tokyo, Japan}
\email{tonegawa@math.titech.ac.jp}

\medskip

\thanks{{\em 2010 Mathematics Subject Classification Numbers:} 
28A75, 49Q15. }

\thanks{{\em Key words and phrases:}
Boundary monotonicity formula, contact angle, varifold. }

\thanks{T. Kagaya is partially supported by JSPS Research Fellow Grant number 16J00547 and Y. Tonegawa is partially supported by JSPS KAKENHI Grant Numbers (A) 25247008 and (S) 26220702. } 

\begin{abstract}
We define a generalized fixed contact angle condition for $n$-varifold and establish a boundary monotonicity formula.
The results are natural generalizations of those for the Neumann boundary condition 
considered by Gr\"{u}ter-Jost \cite{GJ}. 
\end{abstract}

\maketitle \setlength{\baselineskip}{18pt}

\section{Introduction}\label{sec:int}

Almgren proposed varifold as a generalized manifold in \cite{AF} and Allard 
established a number of fundamental properties of varifold such as rectifiability, 
compactness and regularity theorems in \cite{A1}. One of the key tools to 
analyze the local properties of varifold is the 
monotonicity formula \cite[Section 5]{A1}, which gives a good control of measure
whenever the first variation is well-behaved. 

When a varifold has a ``boundary'' in a suitable sense, one expects to have 
a modified monotonicity formula under a suitable set of assumptions. 
Loosely speaking, with prescribed $C^{1,1}$ boundary (which one may regard as ``Dirichlet 
boundary condtion''), Allard obtained a monotonicity formula at the boundary \cite{A} as well as
the regularity theorem up to the boundary. The result is improved recently by 
Bourni to $C^{1,\alpha}$ boundary \cite{Bourni}. For Neumann boundary condition,
which corresponds roughly to the prescribed right angle condition,
Gr\"{u}ter and Jost \cite{GJ} derived a monotonicity formula
by using a reflection technique and obtained the regularity theorem up to the boundary.
 
As a further inquiry, it 
is natural to extend the Neumann boundary condition to more general fixed contact angle 
condition. The condition arises naturally in various capillarity and
free boundary problems, where the boundary of domain under
consideration has non-trivial amount of surface energy. 
For such problem, Taylor \cite{Ta} established the
boundary regularity for area minimizing surfaces. More recently, De Philippis and Maggi \cite{PM} proved the
boundary regularity for minimizers of anisotropic surface 
energy. For minimizing problems, one does not necessarily need a 
monotonicity type formula since one can obtain upper and lower energy density
ratio bounds by some
energy comparison argument. Here, motivated by the dynamical problem such as the 
mean curvature flow,
we would like to investigate the notion of fixed contact angle condition for general varifolds
which do not necessarily correspond to  energy minimizing case. As far as we know, 
this aspect has not been studied in a general setting of varifold so far. 

In this paper, we introduce a notion of contact angle condition for general varifolds.  
Our condition is satisfied for smooth hypersurface having a fixed contact angle with the boundary of domain
under consideration. It is stated in terms of the first variation of varifolds and generalized mean curvature
vectors, and such condition is satisfied for a limit of diffused interface problem (\cite{KT2}). With a natural integrability condition on the generalized mean curvature vector, we prove that a 
modified monotonicity formula holds. 
The results are natural generalizations of those for the Neumann boundary condition 
considered by Gr\"{u}ter-Jost \cite{GJ}. 

The paper is organized as follows. 
Section \ref{sec:not} lists notation and recalls some well-known results from geometric measure theory. 
In Section \ref{sec:as}, we state the definition of fixed contact angle condition and 
discuss the implications such as the monotonicity formula. 
In Section \ref{sec:proof}, we prove the monotonicity formula
and we give a few final remarks in Section \ref{adrem}. 

\medskip

\section{Notation and basic definitions}\label{sec:not}

\subsection{Basic notation}

In this paper, $n$ will be a positive integer. 
For $0<r<\infty$ and $a \in \mathbb{R}^{n+1}$ let 
\[B_r(a) := \{ x \in \mathbb{R}^{n+1} : |x-a| < r\}. \]  
We denote by $\mathcal{L}^k$ the Lebesgue measure on $\mathbb{R}^k$ and by $\mathcal{H}^k$ the 
$k$-dimensional Hausdorff measure on $\mathbb{R}^{n+1}$ for each positive integer $k$. 
The restriction of $\mathcal{H}^k$ to a set $A$ is denoted by $\mathcal{H}^k \lfloor_A$.  
We let 
\[ \omega_k := \mathcal{L}^k(\{x \in \mathbb{R}^k : |x| < 1\}). \]

For any Radon measure $\mu$ on $\mathbb{R}^{n+1}$, $\phi \in C_c(\mathbb{R}^{n+1})$ and $\mu$ measurable set $A$, we often write 
\[ \mu(\phi) := \int_{\mathbb{R}^{n+1}} \phi \; d\mu, \quad \mu(A) := \int_{A} d\mu. \] 
Let the support of $\mu$ be 
\[ \spt(\mu) := \{x \in \mathbb{R}^{n+1} : \mu(B_r(x)) > 0 \; \mbox{for all} \; r>0 \}. \]
Let $\Theta^k(\mu, x)$ be the $k$-dimensional density of $\mu$ at $x$, i.e., 
\[ \Theta^k(\mu, x) := \lim_{r \to 0+} \dfrac{\mu(B_r(x))}{\omega_k r^k}, \]
if the limit exists. 

\subsection{Homogeneous maps and varifolds}

Let $\mathbf{G}(n+1,n)$ be the space of $n$-dimensional subspaces of $\mathbb{R}^{n+1}$. 
For $S \in \mathbf{G}(n+1,n)$, we identify $S$ with the corresponding orthogonal projection of $\mathbb{R}^{n+1}$
onto $S$. 
Let $S^\perp$ be the orthogonal complement of $S$ and we sometimes treat $S^\perp$ as the orthogonal projection $\mathbb{R}^{n+1}$ onto $S^\perp$. 
For two elements $A$ and $B$ of $\mbox{Hom}(\mathbb{R}^{n+1}; \mathbb{R}^{n+1})$, we define a scalar product as 
\[ A \cdot B := \sum_{i,j} A_{ij}B_{ij}. \]
The identity of $\mbox{Hom}(\mathbb{R}^{n+1}; \mathbb{R}^{n+1})$ is denoted by $I$. 
For these elements $A$ and $B$, we define the product, the operator norm and the spectrum norm as 
\[ (A \circ B)_{ij} := \sum_{k} A_{ik}B_{kj}, \quad \|A\| := \sup_{|x|=1} |Ax|, \quad |A| := \sqrt{A \cdot A }, \]
respectively. 

We recall some notions related to varifold and refer to \cite{A1,S} for more details. 
In what follows, let $X \subset \mathbb{R}^{n+1}$ be open and $G_n(X) := X \times \mathbf{G}(n+1, n)$. 
A general $n$-varifold in $X$ is a Radon measure on $G_n(X)$ and $\mathbf{V}_n(X)$ denotes the set of all general $n$-varifolds in $X$. 
For $V \in \mathbf{V}_n(X)$, let $\|V\|$ be the weight measure of V, namely, 
\[ \|V\|(\phi) := \int_{G_n(X)} \phi(x) \; dV(x,S) \quad \mbox{for} \; \phi \in C_c(X). \]
For any $\mathcal{H}^n$ measurable countably $n$-rectifiable set $M \subset X$ with locally finite $\mathcal{H}^n$ measure, there is a natural $n$-varifold 
$|M| \in \mathbf{V}_n(X)$ defined by 
\[ |M|(\phi) := \int_M \phi(x,\tang_xM) \; d\mathcal{H}^n(x) \quad \mbox{for} \; \phi \in C_c(G_n(X)), \]
where $\mbox{Tan}_xM \in \mathbf{G}(n+1, n)$ is the approximate tangent space which exists $\mathcal{H}^n$ a.e. on $M$. 
In this case, the weight measure of $|M|$ equals to $\mathcal{H}^n \lfloor_M$. 
We note that $n$-dimensional density of this varifold is equal to $1$ $\mathcal{H}^n$ a.e. on $M$.  

For $V \in \mathbf{V}_n(X)$, let $\delta V$ be the first variation of $V$, namely, 
\[ \delta V(g) := \int_{G_n(X)} \nabla g(x) \cdot S \; dV(x,S) \quad \mbox{for} \; g \in C^1_c(X;\mathbb{R}^{n+1}). \]
Let $\|\delta V\|$ be the total variation when it exists, and if $\delta V$ is absolutely continuous with respect to $\|V\|$, 
we have $\|V\|$ measurable $h$ with 
\[ \delta V(g) = - \int_X h \cdot g \; d\|V\| \quad \mbox{for} \; g \in C^1_c(X ; \mathbb{R}^{n+1}). \]
The vector field $h$ is called the generalized mean curvature vector of $V$. 

\section{Main results}\label{sec:as}

We first give a definition and then explain why it may be regarded as a generalized 
fixed contact angle condition for a varifold.  
\subsection{Fixed angle condition}
We assume that $\Omega\subset \mathbb R^{n+1}$ is a bounded open set with $C^2$ boundary
$\partial \Omega$. 
\begin{defi}
Given $V\in \mathbf V_n(\Omega)$ with $\|V\|(\Omega)<\infty$, 
$\mathcal H^n$ measurable set $B^+\subset\partial
\Omega$ and $\theta\in [0,\pi]$, we say that ``$V$ has a fixed contact angle $\theta$ with
$\partial \Omega$ at the boundary of $B^+$'' if the following conditions hold. 
\begin{itemize}
\item[(A1)] The generalized mean curvature vector $h$ exists, i.e., 
\begin{equation*}
\int_{G_n(\Omega)} \nabla g(x) \cdot S \; dV(x,S) = - \int_\Omega g(x) \cdot h(x) \; d\|V\|(x) \quad \mbox{for} \; g \in C^1_c(\Omega;\mathbb{R}^{n+1}). 
\end{equation*}
\item[(A2)] By setting $\sigma:=\cos\theta$, we have
\begin{equation}\label{firstvari} 
\int_{G_n(\Omega)} \nabla g(x) \cdot S \; dV(x,S) + \sigma \int_{B^+} \dive_{\partial \Omega} \; g(x) \; d\mathcal{H}^n(x) = - \int_{\Omega} g(x) \cdot h(x) \; d\|V\|(x) 
\end{equation}
for all $g \in C^1(\overline{\Omega}\,;\, \mathbb{R}^{n+1})$ with $g \cdot \nu_{\partial\Omega} = 0$ on $\partial\Omega$, where $\dive_{\partial\Omega}$ is the divergence on $\partial\Omega$ and 
$\nu_{\partial\Omega}$ is the outward unit normal vector on the boundary $\partial\Omega$. 
\end{itemize}
\label{fbc}
\end{defi}
Let us give a justification for the definition. 
Due to $\|V\|(\Omega)<\infty$, by setting $V=0$ outside of $\Omega$, we may extend $V$ to the entire $\mathbb R^{n+1}$ as an element in $\mathbf V_n(\mathbb R^{n+1})$ and we will regard $V$
in this way in the following. The condition (A1) means equivalently that the 
first variation $\delta V$ is absolutely continuous with respect to $\|V\|$. Geometrically
speaking, if $V=|M|$ with some smooth surface $M$, this means that there is no boundary
of $M$ in $\Omega$ since any presence of boundary in $\Omega$ gives a singular 
$\delta V$.  To better explain a motivation for the notion, let us assume the following boundedness
of the first variation in $\overline{\Omega}$, namely, 
\begin{equation}
\sup_{g\in C^1(\overline\Omega \,;\,\mathbb R^{n+1}),\, |g|\leq 1}
\delta V(g)<\infty.
\label{fv1}
\end{equation}
We may include \eqref{fv1} as ``(A3)'' in Definition \ref{fbc}, but as we will see, \eqref{fv1} is not needed to 
prove the subsequent monotonicity formula. 
With the zero extension of $V$, 
\eqref{fv1} means that $V$ has a bounded first variation $\delta V$ 
on $\mathbb R^{n+1}$. We should emphasize that this is typically different from the first
variation as an element of $\mathbf V_n(\Omega)$. The condition (A1)
implies that $\delta V\lfloor_{\Omega}=-h\|V\|\lfloor_{\Omega}$. On the other hand, 
$\delta V\lfloor_{\partial \Omega}$ is singular with respect to $\|V\|$ whenever
it is nonzero, since $\|V\|\lfloor_{\partial \Omega}=0$. 
By the definition of the first variation, we have 
\begin{equation*}
\int_{G_n(\Omega)} \nabla g(x)\cdot S\, dV(x,S)=\int_{G_n(\mathbb R^{n+1})}
\nabla g(x)\cdot S\, dV(x,S)=\int_{\overline\Omega} g(x)\cdot\, d(\delta V)(x).
\end{equation*}
Since $\delta V\lfloor_{\Omega}=-h\|V\|\lfloor_{\Omega}$, the condtion (A2) implies that
\begin{equation}
\int_{\partial\Omega} g\cdot\, d\,(\delta V)
+\sigma\int_{B^+} {\rm div}_{\partial\Omega} \,g\, d\mathcal H^{n}=0
\label{degio1}
\end{equation}
for all $g\in C^1(\overline\Omega \, ;\, \mathbb R^{n+1})$ with $g\cdot \nu_{\partial 
\Omega}=0$. If $\sigma=\cos\theta \neq 0$ (or $\theta\neq \pi/2$), \eqref{degio1} implies that $B^+$ has a finite 
perimeter in $\partial\Omega$. By De Giorgi's theorem (see \cite[Theorem 5.16]{EG}), the second term may be 
expressed as 
\begin{equation}
\sigma\int_{\partial^* B^+} g\cdot {\mathbf n}_{B^+}\, d\mathcal H^{n-1}
\label{degio2}
\end{equation}
where 
$\partial^* B^+$ is the reduced boundary of $B^+$ which is countably $(n-1)$-rectifiable
and ${\mathbf n}_{B^+}$ is the outer pointing unit normal to $\partial ^* B^+$ which
exists $\mathcal H^{n-1}$ a.e.~ on $\partial ^* B^+$. Now, define ${\mathbf n}_V=
\frac{\delta V}{\|\delta V\|}$ on $\partial \Omega$ so that ${\mathbf n}_V \|\delta V\|
=\delta V$. Then \eqref{degio1} and \eqref{degio2} mean that we have
\begin{equation}
\int_{\partial \Omega} g\cdot {\mathbf n}_V\, d\|\delta V\|+\sigma\int_{\partial^* B^+}
g\cdot \mathbf n_{B^+}\, d\mathcal H^{n-1}=0.
\label{degio3}
\end{equation}
Since $\mathbf n_{B^+}$ is tangent to $\partial\Omega$, 
\eqref{degio3} shows that
\begin{equation}
\label{degio4}
(\mathbf n_V -(\mathbf n_V\cdot \nu_{\partial \Omega})\nu_{\partial \Omega} ) 
\|\delta V\|=-\sigma \mathbf n_{B^+}\mathcal H^{n-1}\lfloor_{\partial^* B^+}
\end{equation}
on $\partial \Omega$. 

\begin{figure}[t]
\begin{center}
\includegraphics[width=6cm]{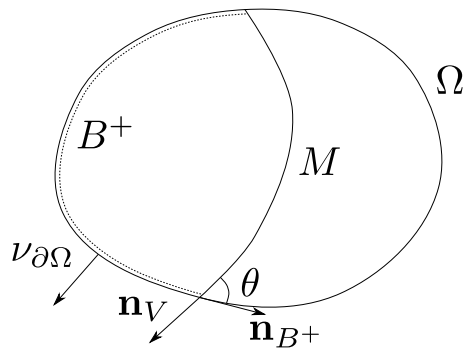}
\end{center}
\caption{The figure for the smooth surface $M$}
\label{fig1}
\end{figure}

Let us see what the above means in the case of smooth surfaces. 
Suppose that $V=|M|$ with a $C^2$ $n$-dimensional surface $M\subset \Omega$ without
boundary inside $\Omega$ but with a nontrivial boundary $\partial M$ in $\partial \Omega$. Suppose also that $B^+$ has a smooth boundary $\partial B^+$. 
Then $\mathbf n_V$ corresponds to the
unit co-normal of $\partial M$ pointing outwards. We also have $\|\delta V\|\lfloor_{\partial\Omega}=\mathcal H^{n-1}\lfloor_{\partial M}$ in this case. The reduced boundary
$\partial^* B^+$ is the usual boundary $\partial B^+$. 
Then \eqref{degio4} means that 
we need to have $\partial B^+\subset \partial M$,
and 
\begin{equation}
\begin{array}{ll} 
\mathbf n_V=\nu_{\partial\Omega}  & \mbox{ on }\partial M\setminus \partial B^+, \\
\mathbf n_V-(\mathbf n_V\cdot \nu_{\partial\Omega})\nu_{\partial\Omega}=
-\sigma\mathbf n_{B^+} & \mbox{ on }\partial B^+.
\end{array}
\label{degio5}
\end{equation}
The first case means that $\partial M$ intersects $\partial\Omega$ with 90 degree if it is not part of 
$\partial B^+$. The second case is precisely the fixed angle condition in the sense that
the angle formed by $\mathbf n_V$ and $-\mathbf n_{B^+}$ is $\theta$ (see figure \ref{fig1}). 
This is the reason for the definition of the fixed angle condition. 
Above discussion does not make sense unless \eqref{fv1} is satisfied, 
so we understand the definition in a weak sense in this case. Interestingly, for varifolds
arising from singular perturbation limit problems, finiteness of the first variation \eqref{fv1} 
is automatically 
satisfied, see \cite{KT2, MT}. On the other hand, the subsequent monotonicity
formula holds even without \eqref{fv1}, and we think it better if \eqref{fv1}
is not included as a part of the definition for a broader applicability of the notion. 

If we assume that there exists an open set $U\subset\mathbb R^{n+1}$ such that 
$M=\Omega\cap \partial U$ and $B^+=\partial \Omega\cap  U$, then, 
$\partial M= \partial B^+$ and the first case of \eqref{degio5} does not happen. 
In the application to the diffused interface limit problem with a fixed angle condition 
\cite{KT2}, the varifold $V$ arises as 
a  ``phase boundary'' in a sense and the presence of $B^+$ follows naturally. 

\subsection{The monotonicity formula}

The following Theorem \ref{mono} is a natural generalization of monotonicity formula of Gr\"{u}ter-Jost \cite{GJ} to 
the fixed angle condition defined in the previous subsection. To present the statement, 
we need some more notation. 
Suppose $\Omega\subset\mathbb R^{n+1}$ is a bounded open set with $C^2$ boundary 
$\partial\Omega$. Define $\varkappa$ as 
\[ \varkappa := \| \mbox{principal curvature of } \partial\Omega\|_{L^{\infty}(\partial\Omega)}.\]
For $s>0$, define a subset $N_{s}$ of $\mathbb{R}^{n+1}$ by 
\[ N_{s} := \{ x \in \mathbb R^{n+1} : \dist(x, \partial\Omega) < s \}.\]
For any boundary point $b\in\partial\Omega$ let 
\[ \tau(b) := \tang_{b}\,\partial\Omega \quad \mbox{and} \quad \nu(b) := \tau(b)^{\perp}. \]
There exists a sufficiently small 
\[s_0 \in (0, \varkappa^{-1}]\] depending only on $\partial \Omega$ 
such that all points $x \in N_{s_0}$ have a unique point $\xi(x) \in \partial \Omega$ such that $\dist(x, \partial \Omega) = |x - \xi(x)|$. 
By using this $\xi(x)$, we define the reflection point $\tilde{x}$ of $x$ with respect to $\partial \Omega$ as $\tilde{x} := 2\xi(x) - x$ 
and the reflection ball $\tilde{B}_r(a)$ of $B_r(a)$ with respect to $\partial \Omega$ as 
\[\tilde{B}_r(a) := \{ x \in \mathbb R^{n+1} : |\tilde{x} - a| < r \}.\] 
In the case for $x \in \partial\Omega$, we note that $\xi(x) = \tilde{x} = x$. 
If $y \in \mathbb{R}^{n+1}$, we set \[i_x (y) := \tau(\xi(x)) y - \nu(\xi(x)) y.\] 

\begin{thm}\label{mono}
Given $V\in \mathbf V_n(\Omega)$ with $\|V\|(\Omega)<\infty$,
$\mathcal H^n$ measurable set $B^+\subset\partial\Omega$ and $\theta\in [0,\pi/2]$,
suppose that $V$ has a fixed contact angle $\theta$ with $\partial \Omega$ at the boundary
of $B^+$, as in Definition \ref{fbc}. Assume that for some $p>n$ and $\Gamma\geq 0$, we
have 
\begin{equation}
\Big(\dfrac{1}{\omega_n}\int_{N_{s_0}\cap\Omega} 2|h(x)|^p\, d\|V\|(x)\Big)^{\frac1p}\leq \Gamma.
\end{equation}
Then there exists a constant $C\geq 0$ depending only on $n$ such that  
for any $x \in N_{s_0/6}\cap \overline\Omega$, 
\begin{equation}
\begin{split}
&\Big\{\frac{\|V\|(B_\rho(x)) + \|V\|(\tilde{B}_\rho(x)) + 2\sigma\mathcal{H}^n\lfloor_{B^+}(B_\rho(x))
}{\omega_n\rho^n}\Big\}^{\frac1p}\Big(1+C\varkappa\rho\Big(1+\dfrac{1}{p-n}\Big)\Big) \\
&+ \dfrac{\Gamma \rho^{1-\frac{n}{p}}}{p-n}
\end{split}
\label{monoeq}
\end{equation}
is a non-decreasing function of $\rho$ in $(0, s_0/6)$. Here $\sigma=\cos\theta$. 
In the case that $\theta\in (\pi/2,\pi]$, 
the same claim holds with $\sigma$ and $B^+$ replaced by $-\sigma$ and $\partial\Omega
\setminus B^+$ in \eqref{monoeq}, respectively. 
\end{thm}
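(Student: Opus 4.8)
\emph{Strategy.} The plan is to adapt the reflection technique of Gr\"uter--Jost \cite{GJ} so as to absorb the extra boundary integral in condition (A2). Fix $x\in N_{s_0/6}\cap\overline\Omega$. For $\rho\in(0,s_0/6)$ and a non-increasing $\gamma\in C^1([0,\infty))$ with $\gamma\equiv1$ near $0$ and $\spt\gamma\subset[0,1]$, recall $\tilde y=2\xi(y)-y$ and consider the test vector field
\[
g(y)=\gamma\!\Big(\tfrac{|y-x|}{\rho}\Big)(y-x)\ +\ \gamma\!\Big(\tfrac{|\tilde y-x|}{\rho}\Big)\,\nabla_y\Big(\tfrac12|\tilde y-x|^2\Big).
\]
Since $\partial\Omega\in C^2$, the nearest-point projection $\xi$ is $C^1$ on $N_{s_0}$, and by the choice of $s_0$ together with $x\in N_{s_0/6}$, $\rho<s_0/6$, both summands are supported in $N_{s_0}$, so $g\in C^1(\overline\Omega\,;\,\mathbb R^{n+1})$. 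On $\partial\Omega$ one has $\tilde y=y$ and $\nabla_y(\tfrac12|\tilde y-x|^2)=i_y(y-x)$, whence $g|_{\partial\Omega}=\gamma(|y-x|/\rho)\,2\,\tau(y)(y-x)$ is tangent to $\partial\Omega$; thus $g$ is admissible in \eqref{firstvari}. I would insert this $g$ into \eqref{firstvari} and afterwards let $\gamma\uparrow\mathbf 1_{[0,1]}$.

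\emph{The computation.} For the radial summand $g_1$ the computation underlying Allard's interior monotonicity gives
\[
\nabla g_1\cdot S=n\,\gamma\!\Big(\tfrac{|y-x|}{\rho}\Big)+\tfrac{|y-x|}{\rho}\,\gamma'\!\Big(\tfrac{|y-x|}{\rho}\Big)-\tfrac{\gamma'(|y-x|/\rho)}{\rho\,|y-x|}\,|S^\perp(y-x)|^2 ,
\]
whose first two terms integrate, after $\gamma\uparrow\mathbf 1_{[0,1]}$, to $-\rho^{n+1}\tfrac{d}{d\rho}\big(\rho^{-n}\|V\|(B_\rho(x))\big)$, while the last term, having the favorable sign since $\gamma'\le0$, is discarded. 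The reflected summand $g_2$ is handled by the same computation with $|\tilde y-x|$ in place of $|y-x|$; it produces $-\rho^{n+1}\tfrac{d}{d\rho}\big(\rho^{-n}\|V\|(\tilde B_\rho(x))\big)$ plus errors coming from $D\xi$ differing from a projection by $O(\varkappa\,\dist(\cdot,\partial\Omega))$ and from the Hessian of $\tfrac12|\tilde y-x|^2$ differing from its flat-reflection value by $O(\varkappa)$; on the support of $g_2$ one has $\dist(\cdot,\partial\Omega)\lesssim\rho$, so these errors integrate to at most $C\varkappa\rho\cdot\rho^{-n}\big(\|V\|(B_\rho(x))+\|V\|(\tilde B_\rho(x))\big)$. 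On $\partial\Omega$, inserting $g|_{\partial\Omega}=2\gamma(|y-x|/\rho)\,\tau(y)(y-x)$ gives $\dive_{\partial\Omega}g=2n\gamma+2\tfrac{|y-x|}{\rho}\gamma'+(\text{favorable-sign term})+O(\varkappa)$, so $\sigma\int_{B^+}\dive_{\partial\Omega}g\,d\mathcal H^n$ contributes $-\rho^{n+1}\tfrac{d}{d\rho}\big(2\sigma\rho^{-n}\mathcal H^n\lfloor_{B^+}(B_\rho(x))\big)$ together with a term of the correct sign \emph{provided $\sigma\ge0$}, plus $O(\varkappa)$-errors. Finally $|g|\le C\rho$ on its support, which lies in $N_{s_0}\cap\Omega$, so H\"older's inequality with $p>n$ yields
\[
\Big|\int_\Omega h\cdot g\,d\|V\|\Big|\le C\rho\,(\omega_n\Gamma^p)^{1/p}\big(\|V\|(B_\rho(x))+\|V\|(\tilde B_\rho(x))\big)^{1-\frac1p}.
\]

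\emph{Assembling and integrating.} Writing $D(\rho)$ for the numerator in \eqref{monoeq} and $\phi(\rho):=D(\rho)/(\omega_n\rho^n)$, the relations above collect (in the sense of measures, each summand of $D$ being non-decreasing) into a differential inequality of the schematic form
\[
\phi'(\rho)\ \ge\ -C\varkappa\,\phi(\rho)\ -\ C\Gamma\,\rho^{-n/p}\,\phi(\rho)^{1-1/p} .
\]
Dividing by $\phi^{1-1/p}$ turns the left side into $p\,\tfrac{d}{d\rho}\phi^{1/p}$ and the last term into the constant multiple $C\Gamma\rho^{-n/p}$, whose primitive is $\tfrac{Cp}{p-n}\Gamma\rho^{1-n/p}$; absorbing the $C\varkappa\phi$-term by an integrating-factor (Gronwall) argument then produces the multiplicative correction $1+C\varkappa\rho\big(1+\tfrac1{p-n}\big)$ appearing in \eqref{monoeq}, and establishes its monotonicity for $\theta\in[0,\pi/2]$. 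For $\theta\in(\pi/2,\pi]$ one has $\sigma=\cos\theta<0$; since $\partial\Omega$ is a closed $C^2$ hypersurface, $\int_{\partial\Omega}\dive_{\partial\Omega}g\,d\mathcal H^n=-\int_{\partial\Omega}g\cdot\mathbf H_{\partial\Omega}\,d\mathcal H^n=O(\varkappa\rho^{n+1})$, so $\sigma\int_{B^+}\dive_{\partial\Omega}g=(-\sigma)\int_{\partial\Omega\setminus B^+}\dive_{\partial\Omega}g+O(\varkappa\rho^{n+1})$ with $-\sigma>0$, and the previous argument applies verbatim with $B^+$ and $\sigma$ replaced by $\partial\Omega\setminus B^+$ and $-\sigma$. (The case $\theta=\pi/2$, $\sigma=0$, has no boundary term and recovers the Neumann case of \cite{GJ}.)

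\emph{Main obstacle.} The delicate step is the second paragraph: tracking every geometric error produced by the reflection (the derivatives of $\xi$, the Jacobian of $y\mapsto\tilde y$, and the Hessian of $\tfrac12|\tilde y-x|^2$) and by the doubled boundary divergence, and verifying that each is controlled by $C\varkappa\rho$ times a mass ratio with $C$ depending only on $n$, so that precisely the combination $1+C\varkappa\rho(1+\tfrac1{p-n})$ emerges after integration. The sign bookkeeping that forces the dichotomy $\theta\le\pi/2$ versus $\theta>\pi/2$, and the limiting passage $\gamma\uparrow\mathbf 1_{[0,1]}$ by approximation, are comparatively routine.
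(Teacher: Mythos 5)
Your overall strategy is the paper's: reflect across $\partial\Omega$, test (A2) with a cutoff times the radial field plus its reflected counterpart, use the tangency of the sum on $\partial\Omega$ to make the test field admissible, estimate the mean curvature term by H\"older, and integrate a differential inequality. But there are two genuine gaps. First, your reflected summand $\gamma(|\tilde y-x|/\rho)\,\nabla_y\bigl(\tfrac12|\tilde y-x|^2\bigr)=\gamma(\cdot)\,(2\nabla\xi(y)-I)^{T}(\tilde y-x)$ is the wrong choice of vector field: since $\partial\Omega$ is only $C^2$, the map $\xi$ is only $C^1$, so this field is merely continuous (its gradient involves $\nabla^2\xi$), hence not admissible in \eqref{firstvari}, and your claim that ``the Hessian of $\tfrac12|\tilde y-x|^2$ differs from its flat-reflection value by $O(\varkappa)$'' is unjustified --- second derivatives of $\xi$ are not controlled by $\varkappa$. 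The paper instead uses $\gamma(\tilde r)\,i_y(\tilde y-x)$ with $i_y=\tau(\xi(y))-\nu(\xi(y))$ (the projection, not the actual differential $2\nabla\xi-I$); this agrees with your field on $\partial\Omega$ (so tangency is preserved) but its gradient involves only $\nabla\xi$ and $\nabla_{\partial\Omega}(\tau-\nu)$, both bounded via $\|Q\|\le\varkappa\,\dist(\cdot,\partial\Omega)(1-\varkappa\,\dist)^{-1}$ and \eqref{tanderi}.

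Second, your schematic inequality $\phi'\ge -C\varkappa\phi-C\Gamma\rho^{-n/p}\phi^{1-1/p}$ drops an error term that cannot be absorbed into $C\varkappa\phi$: the cross term between $Q$ and the reflected radial direction (the paper's $\varepsilon_2$) carries the weight $\tilde r\gamma'(\tilde r)$, so after summing it is bounded by $C\varkappa\rho\,I'(\rho)$, not by $C\varkappa\rho\,I(\rho)$. The correct inequality is $\frac{d}{d\rho}\{\rho^{-n}I\}\ge -C\varkappa\rho^{-n}I-C\varkappa\rho^{1-n}I'-\omega_n^{1/p}\Gamma\rho^{-n}I^{1-1/p}$, and it is precisely the $I'$ term, handled by an integration by parts after taking the $p$-th root, that produces the multiplicative factor $1+C\varkappa\rho(1+\tfrac1{p-n})$ in \eqref{monoeq}; a plain Gronwall factor does not. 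A smaller point: for $\theta\in(\pi/2,\pi]$ you invoke $\int_{\partial\Omega}\dive_{\partial\Omega}g\,d\mathcal H^n=O(\varkappa\rho^{n+1})$, but an additive error of size $\varkappa\rho^{n+1}$ is \emph{not} controlled by $I(\rho)$ (the density ratio can be arbitrarily small) and would break the argument; what saves you is that for $g$ tangent to $\partial\Omega$ this integral is exactly zero, which is what the paper uses to pass to $-\sigma$ and $\partial\Omega\setminus B^+$.
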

See Section \ref{adrem} for more discussion.

\medskip

\section{Proof of Theorem \ref{mono}}\label{sec:proof}

The proof of the monotonicity formula \eqref{monoeq} is similar to that of 
Gr\"{u}ter-Jost \cite{GJ} except that we use \eqref{firstvari} with $\sigma\neq 0$.
For completeness, we present the proof in this section. 

First, we need to estimate the derivatives of $\xi(x)$ and $i_x$, and we 
cite the following lemma (\cite[Lemma 2.2]{A}). 

\begin{lem}\label{lem: deriofxi}
The following statements hold. 
\begin{itemize}
\item[(i)] $\xi$ is continuously differentiable in $N_{s_0}$. 
\item[(ii)] For $x \in N_{s_0}$, $Q(x) := \nabla \xi(x) - \tau(\xi(x))$ is symmetric, 
\begin{equation}\label{qperp} 
Q(x) \circ \nu(\xi(x)) = 0 
\end{equation}
and 
\begin{equation}\label{qderi} 
\|Q(x)\| \le \varkappa \dist(x, \partial \Omega) (1- \varkappa \dist(x, \partial \Omega))^{-1}. 
\end{equation} 
\item[(iii)] For $b \in \partial \Omega$, 
\begin{equation}\label{tanderi} 
\|\nabla_{\partial \Omega} \nu(b)\| = \|\nabla_{\partial \Omega} \tau(b)\| \le \varkappa 
\end{equation}
holds.  
\end{itemize}
\end{lem}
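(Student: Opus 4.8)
The plan is to reduce all three statements to the signed distance function and the tubular-neighborhood structure of the $C^2$ hypersurface $\partial\Omega$. Throughout, let $d$ denote the signed distance to $\partial\Omega$, normalized so that $|d| = \dist(\cdot,\partial\Omega)$ and $\nabla d = \nu_{\partial\Omega}$ on $\partial\Omega$. First I would prove (i) by analyzing the normal map $\Phi(b,t) := b + t\,\nu_{\partial\Omega}(b)$ from $\partial\Omega\times(-s_0,s_0)$ into $N_{s_0}$. Since $\partial\Omega$ is $C^2$, the unit normal $\nu_{\partial\Omega}$ is $C^1$, so $\Phi$ is $C^1$; its differential is $I + t\,\nabla_{\partial\Omega}\nu_{\partial\Omega}$ on the tangent space and $\nu_{\partial\Omega}$ in the $t$-direction, with Jacobian $\prod_i(1 + t\kappa_i)$ nonzero for $|t| < s_0 \le \varkappa^{-1}$ because $|t\kappa_i| \le \varkappa|t| < 1$. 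The uniqueness of the nearest point on $N_{s_0}$, already arranged in the choice of $s_0$, makes $\Phi$ globally injective, hence a $C^1$ diffeomorphism onto $N_{s_0}$. Reading off the inverse yields $\xi, d \in C^1(N_{s_0})$, which is (i); one more differentiation (equivalently, the standard fact that the distance to a $C^2$ boundary is $C^2$ in a one-sided neighborhood) gives $d \in C^2(N_{s_0})$.

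For (ii), I would start from the identities $\xi(x) = x - d(x)\nabla d(x)$ and $\nabla d(x) = \nu_{\partial\Omega}(\xi(x))$. Differentiating the first gives $\nabla\xi = I - \nabla d\otimes\nabla d - d\,\nabla^2 d$. Since $\nabla d\otimes\nabla d$ is exactly the normal projection $\nu(\xi(x))$, we have $I - \nabla d\otimes\nabla d = \tau(\xi(x))$, and therefore $Q(x) = \nabla\xi(x) - \tau(\xi(x)) = -d(x)\,\nabla^2 d(x)$. Symmetry of $Q$ is then immediate from the symmetry of the Hessian of the $C^2$ function $d$, and \eqref{qperp} follows because $\nabla^2 d\cdot\nabla d = \tfrac12\nabla|\nabla d|^2 = 0$, so $Q$ annihilates the normal line spanned by $\nabla d(x)$, i.e. $Q(x)\circ\nu(\xi(x)) = 0$.

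The estimate \eqref{qderi} is the main point, and establishing it is where the real work lies. Differentiating $\nabla d = \nu_{\partial\Omega}\circ\xi$ and writing $W := \nabla_{\partial\Omega}\nu_{\partial\Omega}(\xi(x))$ for the Weingarten map at the foot point (symmetric, with eigenvalues the principal curvatures $\kappa_i$, $|\kappa_i|\le\varkappa$), the chain rule gives $\nabla^2 d = W\circ\nabla\xi = W\circ(\tau(\xi) - d\,\nabla^2 d)$. Restricting to the tangent space and solving the resulting relation $(I + dW)\,\nabla^2 d = W$ yields $\nabla^2 d = (I + dW)^{-1}W$, with eigenvalues $\kappa_i/(1 + d\kappa_i)$; here $I + dW$ is invertible since $|d\kappa_i| \le \varkappa\,\dist(x,\partial\Omega) < 1$. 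Passing to absolute values bounds each eigenvalue by $\varkappa(1 - \varkappa\,\dist(x,\partial\Omega))^{-1}$, so that
\[
\|Q(x)\| = \dist(x,\partial\Omega)\,\|\nabla^2 d(x)\| \le \frac{\varkappa\,\dist(x,\partial\Omega)}{1 - \varkappa\,\dist(x,\partial\Omega)}.
\]
The main obstacle is precisely deriving and inverting this Hessian--shape-operator identity off the surface so as to extract the sharp factor, while keeping the sign conventions (orientation of $\nu_{\partial\Omega}$, sign of $d$, sign in $I\pm dW$) consistent; these signs do not affect the final estimate because only absolute values enter.

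Finally, for (iii) I would use $\nu(b) = \nu_{\partial\Omega}(b)\otimes\nu_{\partial\Omega}(b)$. For a unit tangent vector $e$, $D_e\nu(b) = (D_e\nu_{\partial\Omega})\otimes\nu_{\partial\Omega} + \nu_{\partial\Omega}\otimes(D_e\nu_{\partial\Omega})$, whose operator norm equals $|D_e\nu_{\partial\Omega}|$ since $D_e\nu_{\partial\Omega}$ is tangent to $\partial\Omega$; the definition of $\varkappa$ gives $|D_e\nu_{\partial\Omega}| \le \varkappa$, and taking the supremum over $e$ yields $\|\nabla_{\partial\Omega}\nu(b)\| \le \varkappa$. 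The equality $\|\nabla_{\partial\Omega}\nu(b)\| = \|\nabla_{\partial\Omega}\tau(b)\|$ is immediate from $\tau(b) + \nu(b) = I$, which upon tangential differentiation gives $\nabla_{\partial\Omega}\tau(b) = -\nabla_{\partial\Omega}\nu(b)$. This completes \eqref{tanderi}.
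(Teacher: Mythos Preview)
The paper does not give its own proof of this lemma; it simply cites it as \cite[Lemma~2.2]{A} (Allard's boundary paper). Your argument is correct and supplies precisely the details the paper omits: the tubular-neighborhood diffeomorphism for (i), the identity $Q(x)=-d(x)\nabla^2 d(x)$ for symmetry and \eqref{qperp}, the Hessian--shape-operator relation $\nabla^2 d=(I+dW)^{-1}W$ for the sharp bound \eqref{qderi}, and the rank-two structure of $D_e\nu$ for \eqref{tanderi}. This is essentially how Allard proceeds as well, so there is no methodological divergence to compare---your write-up simply makes the cited result self-contained.
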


In addition, we need the following.
\begin{lem}\label{lem: ball}
Assume $a \in N_{s_0}$ and $\rho>0$ satisfy $\dist(a,\partial \Omega) \le \rho$ and $B_{\rho}(a) \subset N_{s_0}$. 
Then for any point $x \in \tilde{B}_\rho(a)$  
\begin{equation}\label{dist} 
\dist(x,\partial \Omega) \le 2 \rho 
\end{equation} 
and
\begin{equation}\label{ball} 
\tilde{B}_\rho(a) \subset B_{5\rho}(a). 
\end{equation}
\end{lem}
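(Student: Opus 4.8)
The plan is to reduce everything to the structure of $\Omega$ near $\partial\Omega$: since $a\in N_{s_0}$ with $\mathrm{dist}(a,\partial\Omega)\le\rho$ and $B_\rho(a)\subset N_{s_0}$, the nearest-point projection $\xi$ and the reflection $x\mapsto\tilde x$ are well-defined and $C^1$ on a neighborhood of the relevant region, by Lemma~\ref{lem: deriofxi}(i). The two claims are of slightly different flavors: \eqref{dist} is a metric estimate about where reflected points can lie, while \eqref{ball} is a containment of the reflected ball in a concentric Euclidean ball. I would treat \eqref{dist} first and then feed it into \eqref{ball}.

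For \eqref{dist}: fix $x\in\tilde B_\rho(a)$, so by definition $|\tilde x-a|<\rho$. Write $b:=\xi(x)\in\partial\Omega$, so that $\tilde x=2b-x$ and hence $x=2b-\tilde x$; the key identity is $|x-b|=|\tilde x-b|=\mathrm{dist}(x,\partial\Omega)$. The plan is to estimate $\mathrm{dist}(x,\partial\Omega)=|x-b|$ by first controlling $\mathrm{dist}(\tilde x,\partial\Omega)$. Since $\tilde x\in B_\rho(a)\subset N_{s_0}$, and since $a$ is within $\rho$ of $\partial\Omega$, pick $a'\in\partial\Omega$ with $|a-a'|=\mathrm{dist}(a,\partial\Omega)\le\rho$; then $\mathrm{dist}(\tilde x,\partial\Omega)\le|\tilde x-a'|\le|\tilde x-a|+|a-a'|<2\rho$. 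Now $|x-b|=|\tilde x-b|=\mathrm{dist}(\tilde x,\partial\Omega)<2\rho$, giving \eqref{dist}. (One must be slightly careful that $\xi(\tilde x)=\xi(x)=b$, which holds because reflection through $\partial\Omega$ fixes the nearest point whenever both $x$ and $\tilde x$ lie in $N_{s_0}$; this is the content of the reflection construction, and \eqref{dist} together with $2\rho\le 2\cdot s_0\le s_0$ is... no, rather one checks $x\in N_{s_0}$ a posteriori or notes that $x\in B_{5\rho}(a)$ once \eqref{ball} is in hand — the logical order has to be arranged so this is not circular.)

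For \eqref{ball}: take $x\in\tilde B_\rho(a)$ and estimate $|x-a|$ by splitting through $b=\xi(x)$ and $\tilde x$. We have $|x-a|\le|x-b|+|b-\tilde x|+|\tilde x-a|$. The middle two: $|b-\tilde x|=|b-x|=\mathrm{dist}(x,\partial\Omega)<2\rho$ by \eqref{dist}, and $|\tilde x-a|<\rho$. For $|x-b|$ we again use $|x-b|=\mathrm{dist}(x,\partial\Omega)<2\rho$. So $|x-a|<2\rho+2\rho+\rho=5\rho$, which is \eqref{ball}. Thus \eqref{ball} follows from \eqref{dist} by the triangle inequality, essentially for free.

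The one genuine obstacle is the bookkeeping around the identity $\xi(\tilde x)=\xi(x)$ and the fact that $x$ itself lies in a set where the reflection machinery is valid — i.e., confirming that for $x\in\tilde B_\rho(a)$ the point $x$ is within $s_0$ of $\partial\Omega$ so that $\tilde x=2\xi(x)-x$ is the correct reflection and $\mathrm{dist}(x,\partial\Omega)=|x-\xi(x)|$. The clean way is to observe that $\tilde x\in B_\rho(a)\subset N_{s_0}$ is the primary hypothesis, so $\xi(\tilde x)$ is defined; set $b:=\xi(\tilde x)$ and then \emph{define} $x$ to be the reflection $2b-\tilde x$, so that $x$ is the point whose reflection is $\tilde x$; then $\mathrm{dist}(\tilde x,\partial\Omega)<2\rho\le 2 s_0$ — here one needs $\mathrm{dist}(\tilde x,\partial\Omega)<s_0$, which requires $2\rho<s_0$, i.e., $\rho<s_0/2$; since the lemma is stated for $\rho$ with $B_\rho(a)\subset N_{s_0}$ and $\mathrm{dist}(a,\partial\Omega)\le\rho$, one gets $2\rho\le\mathrm{dist}(a,\partial\Omega)+\rho+\mathrm{dist}(\tilde x,a)\le s_0$-type bounds, but the sharp tracking of which ball is inside $N_{s_0}$ is where I would spend care — everything else is three applications of the triangle inequality.
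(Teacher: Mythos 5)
Your proof is correct and is essentially the paper's own argument: the identity $\dist(x,\partial \Omega)=|x-\xi(x)|=|\tilde{x}-\xi(x)|=\dist(\tilde{x},\partial \Omega)$ together with $\dist(\tilde{x},\partial\Omega)\le |\tilde{x}-a|+\dist(a,\partial\Omega)<2\rho$ gives \eqref{dist}, and the triangle inequality $|x-a|\le |\tilde{x}-a|+2|x-\xi(x)|<\rho+4\rho$ gives \eqref{ball}. The circularity you worry about does not arise: membership in $\tilde{B}_\rho(a)$ already presupposes that $\tilde{x}=2\xi(x)-x$ is defined, i.e.\ $x\in N_{s_0}$, and $\xi(\tilde{x})=\xi(x)$ holds by the defining property of $s_0$ (a point on the normal segment through $\xi(x)$ at distance less than $s_0$ has $\xi(x)$ as its unique nearest boundary point), so no additional constraint such as $2\rho<s_0$ is needed.
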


\begin{proof}
For any $x \in \tilde{B}_\rho(a)$, we have by the assumption $\dist(a,\partial \Omega) \le \rho$
\begin{align*}
\dist(x,\partial \Omega) =& \dist(\tilde{x},\partial \Omega) \le |\tilde{x} - a| + \dist(a,\partial \Omega) < 2\rho,
\end{align*}
which shows \eqref{dist}. 
Using \eqref{dist} we have 
\[ |x-a| \le |2\xi(x) - x - a| + 2|\xi(x) - x| = |\tilde{x}-a| + 2\dist(x,\partial \Omega) < 5\rho \]
and hence \eqref{ball} holds. 
\end{proof}

\begin{proof}[Proof of Theorem \ref{mono}] 
First, for $\theta\in (\pi/2,\pi]$, using $\int_{\partial\Omega}{\rm div}_{\partial\Omega}\,
g(x)\, d\mathcal H^n (x)=0$ for $g$ tangent to $\partial\Omega$, we may replace
$\sigma$ by $-\sigma$ and $B^+$ by $\partial\Omega\setminus B^+$ in \eqref{firstvari}.
Thus in the following, we may assume without loss of generality that $\sigma\geq 0$. 

For any point $a \in N_{s_0/6}\cap\overline{\Omega}$, we choose for \eqref{firstvari} the test function to prove the monotonicity formula around $a$. 
Let $\gamma \in C^1(\mathbb{R})$ satisfy 
\begin{equation*}
\gamma(t) = 
\begin{cases}
1, & t \le \dfrac{\rho}{2}, \\
0, & t \ge \rho
\end{cases}
\end{equation*} 
and $\gamma^{\prime}(t) \le 0$ for any $t$, where a constant $\rho>0$ satisfies $0 < \rho \le s_0/6$. 
Thus the boundary point $\xi(x)$ is defined for $x \in B_\rho(a)$ from $\dist(x, \partial \Omega) \le s_0/3$. 
Let the vector field $g$ be 
\[ g(x) = \gamma(r) (x-a) + \gamma(\tilde{r}) (i_x(\tilde{x}-a)), \]
where $r = |x-a|$ and $\tilde{r} = |\tilde{x}-a|$. 
This vector field $g$ may satisfy the property $g \cdot \nu_{\partial \Omega} = 0$ on $\partial \Omega$ by the following argument. 
For $x \in \partial \Omega$, $x=\tilde{x}=\xi(x)$ yields $r=\tilde{r}$ and 
\[ i_x (\tilde{x}-a) = \tau(x)(x-a) - \nu(x)(x-a). \]
Thus, we have 
\[ g(x) = \gamma(r) \{\tau(x)(x-a) + \nu(x)(x-a) \} + \gamma(r) \{\tau(x)(x-a) - \nu(x)(x-a)\} = 2\gamma(r) \tau(x)(x-a) \] 
and hence $g \cdot \nu_{\partial \Omega} = 0$ holds on $\partial \Omega$. 
To substitute $g$ in \eqref{firstvari}, we calculate the gradient of $g$ 
\begin{equation}\label{gradi} 
\nabla g(x) = \nabla (\gamma(r)(x-a)) + \nabla (\gamma(\tilde{r})(i_x(\tilde{x}-a))). 
\end{equation}
For the first term of right hand side of \eqref{gradi}, we have by a simple calculation 
\[ \nabla (\gamma(r)(x-a)) = r \gamma^{\prime}(r) \left(\dfrac{x-a}{r} \otimes \dfrac{x-a}{r}\right) + \gamma(r) I. \]
For the second term of right hand side of \eqref{gradi}, we calculate the following matrices $M$ and $N$: 
\begin{equation}\label{divide2}
\nabla (\gamma(\tilde{r})(i_x(\tilde{x}-a))) = \left(\dfrac{\partial}{\partial x_i}(\gamma(\tilde{r}))(i_x(\tilde{x}-a))_j\right) + \left( \gamma(\tilde{r}) \dfrac{\partial (i_x(\tilde{x}-a))_j}{\partial x_i} \right) =: (M_{ij}) + (N_{ij}). 
\end{equation}

Calculation of $M$: By the definitions $\tilde{x} = 2\xi(x)-x$ and $\tilde{r} = |\tilde{x}-a|$, we have 
\begin{align*}
\dfrac{\partial}{\partial x_i}(\gamma(\tilde{r})) =& \, \gamma^{\prime}(\tilde{r}) \dfrac{\partial}{\partial x_i} \sqrt{(2\xi_1(x) - x_1 - a_1)^2 + \cdots + (2\xi_{n+1}(x) - x_{n+1} - a_{n+1})^2} \\
=& \, \dfrac{\gamma^{\prime}(\tilde{r})}{\tilde{r}} \left\{\left(\sum_{k} 2\dfrac{\partial \xi_k(x)}{\partial x_i}(\tilde{x}-a)_k\right) - (\tilde{x}-a)_i\right\}. 
\end{align*}
Thus, the matrix element of $M$ is represented by 
\[ M_{ij} = \dfrac{\gamma^{\prime}(\tilde{r})}{\tilde{r}} \left\{\left(\sum_{k} 2\dfrac{\partial \xi_k(x)}{\partial x_i}(\tilde{x}-a)_k(i_x(\tilde{x}-a))_j\right) - (\tilde{x}-a)_i(i_x(\tilde{x}-a))_j\right\} \]
and hence, by $Q(x) = \nabla \xi(x) - \tau(\xi(x))$ and $i_x = \tau(\xi(x)) - \nu(\xi(x))$, we have 
\begin{align}
M =& \, \dfrac{\gamma^{\prime}(\tilde{r})}{\tilde{r}} \{2\nabla \xi(x) \circ ((\tilde{x}-a) \otimes i_x(\tilde{x}-a)) - (\tilde{x}-a) \otimes i_x(\tilde{x}-a) \} \label{calA}\\
=& \, \dfrac{\gamma^{\prime}(\tilde{r})}{\tilde{r}} \{2Q(x) \circ ((\tilde{x}-a) \otimes i_x(\tilde{x}-a)) + i_x \circ ((\tilde{x}-a) \otimes i_x(\tilde{x}-a)) \} \notag\\
=& \, \tilde{r}\gamma^{\prime}(\tilde{r}) \left\{ 2Q(x) \circ \left(\dfrac{\tilde{x}-a}{\tilde{r}} \otimes i_x\left(\dfrac{\tilde{x}-a}{\tilde{r}}\right) \right)+ i_x\left(\dfrac{\tilde{x}-a}{\tilde{r}}\right) \otimes i_x\left(\dfrac{\tilde{x}-a}{\tilde{r}}\right) \right\}. \notag
\end{align}

Calculation of $N$: We define the matrix $L(\xi(x))$ by $L(\xi(x)) = \tau(\xi(x)) - \nu(\xi(x)) = i_x$ and calculate as 
\begin{align*}
\dfrac{\partial}{\partial x_i}(i_x(\tilde{x}-a))_j =& \, \dfrac{\partial}{\partial x_i}\left(\sum_k L_{jk}(\xi(x))(\tilde{x}-a)_k\right) \\
=& \, \sum_k \left(\sum_l \dfrac{\partial L_{jk}}{\partial \xi_l}(\xi(x)) \dfrac{\partial \xi_l}{\partial x_i}(x)\right)(\tilde{x}-a)_k + \sum_k L_{jk}(\xi(x)) \left(2\dfrac{\partial \xi_k}{\partial x_i}(x) - \delta_{ik}\right), 
\end{align*}
where $\delta_{ik}$ is the Kronecker delta. 
By using this calculation, $Q(x) = \nabla \xi(x) - \tau(\xi(x))$ and \eqref{qperp}, we obtain 
\begin{align}
N =& \, \gamma(\tilde{r}) \{ \nabla \xi(x) \circ \nabla_{\partial \Omega} L(\xi(x)) \circ (\tilde{x}-a) + 2 \nabla \xi(x) \circ T(\xi(x)) - T(\xi(x)) \} \label{calB}\\
=& \, \gamma(\tilde{r}) \{ (Q(x) + \tau(\xi(x))) \circ \nabla_{\partial \Omega} (\tau - \nu) (\xi(x)) \circ (\tilde{x}-a) \notag\\
&+2 (Q(x) + \tau(\xi(x))) \circ (\tau(\xi(x)) - \nu(\xi(x))) - \tau(\xi(x)) + \nu(\xi(x)) \} \notag\\
=& \, \gamma(\tilde{r}) \{ (Q(x) + \tau(\xi(x))) \circ \nabla_{\partial \Omega} (\tau - \nu) (\xi(x)) \circ (\tilde{x}-a) + 2Q(x) + I\}, \notag
\end{align}
where $\cdot \circ \cdot \circ \cdot$ has to be interpreted appropriately. 

Substituting \eqref{calA} and \eqref{calB} in \eqref{divide2}, we have 
\begin{align*}
\nabla g(x) =& \, (\gamma(r)+\gamma(\tilde{r})) I + r\gamma^{\prime}(r)\left(\dfrac{x-a}{r} \otimes \dfrac{x-a}{r} \right) \\
& \, + \tilde{r}\gamma^{\prime}(\tilde{r}) \left\{i_x\left(\dfrac{\tilde{x}-a}{\tilde{r}} \right) \otimes i_x\left(\dfrac{\tilde{x}-a}{\tilde{r}} \right)\right\} 
+ \gamma(\tilde{r})\tilde{\varepsilon}_1(x) - \tilde{r}\gamma^{\prime}(\tilde{r}) \tilde{\varepsilon}_2(x), 
\end{align*}
where
\begin{align*}
&\tilde{\varepsilon}_1(x) = 2Q(x) + \{\tau(\xi(x)) + Q(x)\} \circ \nabla_{\partial \Omega}(\tau - \nu)(\xi(x)) \circ (\tilde{x} - a), \\
&\tilde{\varepsilon}_2(x) = -2Q(x) \circ \left(\dfrac{\tilde{x}-a}{\tilde{r}} \otimes i_x\left(\dfrac{\tilde{x}-a}{\tilde{r}}\right) \right). 
\end{align*}
Thus, for any $S \in \mathbf{G}(n+1,n)$ 
\begin{align*}
\nabla g(x) \cdot S =& \, n(\gamma(r) + \gamma(\tilde{r})) + r \gamma^{\prime}(r) (1-|S^{\perp}(\nabla r)|^2) \\
&\, + \tilde{r}\gamma^{\prime}(\tilde{r}) \left(1-\left|S^{\perp}\left(i_x\left(\dfrac{\tilde{x}-a}{\tilde{r}}\right)\right)\right|^2\right) 
+ \gamma(\tilde{r})\varepsilon_1(x,S) - \tilde{r}\gamma^{\prime}(\tilde{r})\varepsilon_2(x,S), 
\end{align*}
where  
\begin{align*}
&\varepsilon_1(x,S) = 2(S \cdot Q(x)) + \langle S \circ\{\tau(\xi(x)) + Q(x)\}, \nabla_{\partial \Omega}(\tau-\nu)(\xi(x)), \tilde{x}-a \rangle, \\
&\varepsilon_2(x,S) = -2\left\{Q(x)\left(\dfrac{\tilde{x}-a}{\tilde{r}}\right)\right\} \cdot \left\{S \circ i_x\left(\dfrac{\tilde{x}-a}{\tilde{r}}\right)\right\}
\end{align*}
and $\langle \cdot, \cdot, \cdot \rangle$ has to be interpreted appropriately. 
For $x \in \partial \Omega$, the properties $r = \tilde{r}$, $x=\tilde{x}$, $\dist(x,\partial \Omega)=0$ and \eqref{qderi} yield
\begin{align*}
\dive_{\partial \Omega} \; g(x) =& 2n\gamma(r) + r\gamma^{\prime}(r) \{(1-|\nu(x)(\nabla r)|^2) + (1-|\nu(x)(i_x(\nabla r))|^2)\} + \gamma(r)\varepsilon_3(x), 
\end{align*}
where 
\begin{align*}
\varepsilon_3(x) = \langle \tau(x), \nabla_{\partial \Omega}(\tau - \nu)(x), x-a \rangle. 
\end{align*}
Thus \eqref{firstvari}, $\sigma \ge 0$ and $\gamma^{\prime} \le 0$ imply 
\begin{align}\label{mono1}
&n\left(\int_\Omega \gamma(r) + \gamma(\tilde{r}) \; d\|V\| + 2\sigma \int_{B^+} \gamma(r) \; d\mathcal{H}^n \right) \\
&+ \int_\Omega r\gamma^{\prime}(r) + \tilde{r}\gamma^{\prime}(\tilde{r}) \; d\|V\| + 2\sigma \int_{B^+} r\gamma^{\prime}(r) \; d\mathcal{H}^n \notag\\ 
\le& -\int_\Omega \{\gamma(r)(x-a) + \gamma(\tilde{r})i_x(\tilde{x}-a)\} \cdot h \; d\|V\| \notag\\
&- \left(\int_{G_n(\Omega)} \gamma(\tilde{r})\varepsilon_1(x,S) - \tilde{r}\gamma^{\prime}(\tilde{r})\varepsilon_2(x,S) \; dV(x,S) \right) -\sigma \int_{B^+} \gamma(r)\varepsilon_3(x) \; d\mathcal{H}^{n}. \notag
\end{align}

Now take $\phi \in C^1(\mathbb{R})$ such that 
\[ \phi(t) = 1 \quad \mbox{for} \; t \le 1/2, \quad \phi(t) = 0 \quad \mbox{for} \; t \ge 1\]
and $\phi^{\prime}(t) \le 0$ for all $t$ and use \eqref{mono1} with $\gamma(r) = \phi(r/\rho)$. 
For this $\gamma$, 
\[ r\gamma^{\prime}(r) = -\rho\dfrac{\partial}{\partial \rho} \left\{\phi \left(\dfrac{r}{\rho}\right)\right\} \]
holds and implies 
\[ n I(\rho) - \rho I^{\prime}(\rho) \le -\{ H(\rho) +E_1(\rho) + \rho E_2^{\prime}(\rho) + E_3(\rho) \}, \]
where 
\begin{align*}
& I(\rho) = \int_\Omega \phi\left(\dfrac{r}{\rho}\right) + \phi\left(\dfrac{\tilde{r}}{\rho}\right) \; d\|V\| + 2\sigma \int_{B^+} \phi\left(\dfrac{r}{\rho}\right) \; d\mathcal{H}^n, \\
& H(\rho) = \int_\Omega \left\{ \phi\left(\dfrac{r}{\rho}\right)(x-a) + \phi\left(\dfrac{\tilde{r}}{\rho}\right)i_x(\tilde{x}-a) \right\} \cdot h \; d\|V\|, \\
& E_i(\rho) = \int_{G_n(\Omega)} \phi\left(\dfrac{\tilde{r}}{\rho}\right)\varepsilon_i(x,S) \; dV(x,S), \quad i=1,2, \\
& E_3(\rho) = \sigma \int_{B^+} \phi\left(\dfrac{r}{\rho}\right)\varepsilon_3(x) \; d\mathcal{H}^n. 
\end{align*}
Multiplying by $\rho^{-n-1}$ we have 
\begin{equation}\label{ine1}
\dfrac{d}{d\rho} \{ \rho^{-n}I(\rho) \} \ge \rho^{-n} E_2^{\prime}(\rho) + \rho^{-n-1}\{H(\rho) + E_1(\rho) + E_3(\rho)\}. 
\end{equation}
To estimate the right hand side of this inequality, we apply \eqref{qderi} and \eqref{dist}. 
>From $a \in N_{s_0/6} \cap \overline{\Omega}$ and $\rho \le s_0/6$, the distance $\dist (x, \partial \Omega) \le s_0/3 \le 1/(3\varkappa)$ for $x \in \tilde{B}_{\rho}(a)$ with $\rho \ge \dist(a,\partial \Omega)$. 
Hence $\varkappa \dist(x, \partial \Omega) \le 1/3$ and   
\begin{align*}
&|\varepsilon_i (x)| \le C\varkappa \rho, \quad \mbox{for} \; x \in \tilde{B}_\rho(a), \; i=1,2, \\
&|\varepsilon_3(x)| \le C\varkappa \rho \quad \mbox{for} \; x \in B^+ \cap B_\rho(a)  
\end{align*} 
with some constant $C$ depending only on $n$ whenever $\rho \ge \dist(a,\partial \Omega)$. 
These inequalities, $\sigma \ge 0$ and $\partial/(\partial \rho) \{\phi(\tilde{r}/\rho)\} \ge 0$ imply 
\begin{align*}
&|E_1(\rho)| \le C\varkappa \rho \int_\Omega \phi\left(\dfrac{\tilde{r}}{\rho}\right) \; d\|V\| \le C\varkappa\rho I(\rho), \\
&|E_2^{\prime}(\rho)| \le C\varkappa \rho \dfrac{\partial}{\partial \rho}\int_\Omega \phi\left(\dfrac{\tilde{r}}{\rho}\right) \; d\|V\| \le C\varkappa\rho I^{\prime}(\rho), \\
&|E_3 (\rho)| \le C\varkappa \rho \sigma \int_{B^+} \phi\left(\dfrac{r}{\rho}\right) d\mathcal{H}^n \le C\varkappa\rho I(\rho) 
\end{align*}
for any $\rho > 0$ because of $\tilde{B}_\rho(a) \cap \spt\|V\| = \emptyset$ and $B_\rho(a) \cap B^+ =\emptyset$ whenever $\rho < \dist(a,\partial \Omega)$. 
By applying the H\"{o}lder inequality and \eqref{ball}, we have 
\begin{align*} 
H(\rho) &\le \rho \left(\int_\Omega \phi\left(\dfrac{r}{\rho}\right) + \phi\left(\dfrac{\tilde{r}}{\rho}\right) \; d\|V\|\right)^{1-1/p}
\left(\int_\Omega \left(\phi\left(\dfrac{r}{\rho}\right) + \phi\left(\dfrac{\tilde{r}}{\rho}\right)\right)|h|^p \; d\|V\|\right)^{1/p} \\
&\le \rho I^{1-1/p}(\rho) \left(\int_{N_{s_0}\cap \Omega} 2 |h|^p \; d\|V\|\right)^{1/p} \le \rho \omega_n^{1/p} \Gamma I^{1-1/p}(\rho). 
\end{align*}
Combining these inequalities and \eqref{ine1}, we have 
\[ \dfrac{d}{d\rho} \{\rho^{-n} I(\rho)\} \ge -C\varkappa\rho^{-n}I(\rho) - C\varkappa\rho^{-n+1}I^{\prime}(\rho) - \omega_n^{1/p}\Gamma\rho^{-n}I^{1-1/p}(\rho) \]
and $p>n$ implies 
\begin{align*} 
&\, \dfrac{d}{d\rho} \{\rho^{-n} I(\rho)\}^{1/p} \\
\ge&\, -\dfrac{1}{p}\{C\varkappa\rho^{-n/p}I^{1/p}(\rho) + C\varkappa\rho^{1-n/p}I^{1/p-1}(\rho)I^{\prime}(\rho) + \omega_n^{1/p}\Gamma\rho^{-n/p} \} \\
\ge&\, -\dfrac{C\varkappa}{p}\left\{(1+p-n)\rho^{-n/p}I^{1/p}(\rho) + \left(1+ \dfrac{1}{p-n}\right)\rho^{1-n/p}I^{1/p-1}(\rho)I^{\prime}(\rho) \right\} - \dfrac{\omega_n^{1/p}\Gamma}{p}\rho^{-n/p}.
\end{align*}
Integrating from $\sigma$ to $s$ and integrating by parts, we may estimate 
\begin{align*}
&\{\rho^{-n} I(\rho)\}^{1/p} - \{s^{-n} I(s)\}^{1/p} \\
\ge& -C\varkappa\left(1+\dfrac{1}{p-n}\right)(\rho^{1-n/p}I^{1/p}(\rho) - s^{1-n/p}I^{1/p}(s)) - \omega_n^{1/p}\dfrac{\Gamma}{p-n}(\rho^{1-n/p}-s^{1-n/p}). 
\end{align*}
Rearranging terms and letting $\phi$ increase to $\chi_{(-\infty,1)}$ we have the monotonicity formula. 
\end{proof}

\section{Additional remarks}
\label{adrem}
As a consequence of the monotonicity formula, we may conclude the following. 
\begin{cor}
Under the same assumption of Theorem \ref{mono}, for any $x\in N_{s_0/6}\cap \overline\Omega$, 
\begin{equation}
\label{monolim}
\lim_{\rho\rightarrow 0+} \frac{\|V\|(B_\rho (x))+\|V\|(\tilde B_\rho (x))
+2\sigma\mathcal H^n\lfloor_{B^+}(B_\rho(x))}{\omega_n \rho^n}
\end{equation}
exists and it is upper-semicontinuous function of $x$. 
\end{cor}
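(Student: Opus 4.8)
The plan is to read off both statements from the monotonicity in Theorem \ref{mono}, with no new geometry. By the last sentence of that theorem I may assume $\sigma=\cos\theta\ge0$ (replace $(\sigma,B^+)$ by $(-\sigma,\partial\Omega\setminus B^+)$ when $\theta\in(\pi/2,\pi]$). For $x\in N_{s_0/6}\cap\overline\Omega$ and $\rho\in(0,s_0/6)$ write
\[
G(x,\rho):=\frac{\|V\|(B_\rho(x))+\|V\|(\tilde B_\rho(x))+2\sigma\mathcal H^n\lfloor_{B^+}(B_\rho(x))}{\omega_n\rho^n}
\]
for the quotient in \eqref{monolim}, and $F(x,\rho)$ for the full expression in \eqref{monoeq}, so that $F(x,\rho)=G(x,\rho)^{1/p}\bigl(1+C\varkappa\rho(1+\tfrac1{p-n})\bigr)+\tfrac{\Gamma}{p-n}\rho^{1-n/p}$ and, by Theorem \ref{mono}, $\rho\mapsto F(x,\rho)$ is non-decreasing on $(0,s_0/6)$. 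Being non-decreasing and non-negative, this function has a finite limit $\Phi(x):=\lim_{\rho\to0+}F(x,\rho)=\inf_{0<\rho<s_0/6}F(x,\rho)$; solving the above identity for $G$ gives
\[
G(x,\rho)^{1/p}=\frac{F(x,\rho)-\tfrac{\Gamma}{p-n}\rho^{1-n/p}}{1+C\varkappa\rho(1+\tfrac1{p-n})},
\]
whose numerator tends to $\Phi(x)$ and whose denominator tends to $1$ as $\rho\to0+$ (here $p>n$ is used), so that $\lim_{\rho\to0+}G(x,\rho)=\Phi(x)^p$ exists. This is the first assertion, and it identifies the limit \eqref{monolim} with $\Phi(x)^p$.

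For the upper semicontinuity, since $t\mapsto t^p$ is continuous and non-decreasing on $[0,\infty)$ and $\Phi\ge0$, it suffices to prove $\limsup_{j\to\infty}\Phi(x_j)\le\Phi(x)$ for every sequence $x_j\to x$ in $N_{s_0/6}\cap\overline\Omega$. I would fix $\rho\in(0,s_0/12)$ and set $\rho_j:=\rho+|x_j-x|$. Then $B_\rho(x_j)\subset B_{\rho_j}(x)$, $B^+\cap B_\rho(x_j)\subset B^+\cap B_{\rho_j}(x)$, and $\tilde B_\rho(x_j)\subset\tilde B_{\rho_j}(x)$, the last because $|\tilde y-x|\le|\tilde y-x_j|+|x_j-x|$ and the reflection $y\mapsto\tilde y$ does not depend on the ball's center. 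Since $\sigma\ge0$ and the measures $\|V\|$ and $\mathcal H^n\lfloor_{B^+}$ are monotone, these inclusions give $G(x_j,\rho)\le(\rho_j/\rho)^nG(x,\rho_j)$; combining this with $1+C\varkappa\rho_j(1+\tfrac1{p-n})\ge1$ and the identities relating $F$ and $G$, one obtains
\[
F(x_j,\rho)\le\Bigl(\tfrac{\rho_j}{\rho}\Bigr)^{n/p}F(x,\rho_j)\Bigl(1+C\varkappa\rho\bigl(1+\tfrac1{p-n}\bigr)\Bigr)+\tfrac{\Gamma}{p-n}\rho^{1-n/p}.
\]
Now $\Phi(x_j)\le F(x_j,\rho)$; letting $j\to\infty$ (so $\rho_j\to\rho$, $(\rho_j/\rho)^{n/p}\to1$, and $F(x,\rho_j)\le F(x,2\rho)$ once $|x_j-x|<\rho$, by monotonicity) gives $\limsup_j\Phi(x_j)\le F(x,2\rho)\bigl(1+C\varkappa\rho(1+\tfrac1{p-n})\bigr)+\tfrac{\Gamma}{p-n}\rho^{1-n/p}$, and letting $\rho\to0+$ the right-hand side converges to $\Phi(x)$ (using $1-n/p>0$ for the last term). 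Hence $\limsup_j\Phi(x_j)\le\Phi(x)$.

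The argument is entirely a consequence of the monotonicity formula, so I anticipate no genuine difficulty. The two points that call for care are the inclusion $\tilde B_\rho(x_j)\subset\tilde B_{\rho_j}(x)$ — legitimate precisely because every point involved lies in $N_{s_0}$, where the reflection is one fixed, center-independent map — and the sign condition $\sigma\ge0$, which keeps the $B^+$-term on the correct side of each inequality. The only place a slip could occur is in running the two limits ($j\to\infty$, then $\rho\to0+$), which must be done using monotonicity of $\rho\mapsto F(x,\rho)$ rather than continuity, since that map may have jumps.
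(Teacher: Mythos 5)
Your proof is correct: the paper states this corollary without an explicit proof, presenting it as a direct consequence of Theorem \ref{mono}, and your argument --- existence of the limit from monotonicity of the quantity \eqref{monoeq} together with $\rho^{1-n/p}\to 0$, and upper semicontinuity via the inclusions $B_\rho(x_j)\subset B_{\rho_j}(x)$, $\tilde B_\rho(x_j)\subset\tilde B_{\rho_j}(x)$ with $\rho_j=\rho+|x_j-x|$ followed by the two limits $j\to\infty$ and $\rho\to 0+$ --- is exactly the standard deduction the authors intend. The two delicate points you flag (center-independence of the reflection on $N_{s_0}$, and $\sigma\ge 0$ after the reduction for $\theta\in(\pi/2,\pi]$) are indeed the only ones requiring care, and you handle both correctly.
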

For $x\in \partial\Omega$, $B_\rho(x)$ and $\tilde B_{\rho}(x)$
approach to each other as $\rho\rightarrow 0+$, thus if $\sigma=0$, 
this implies the existence of 
$\Theta^n(\|V\|,x)$ 
for $x\in \partial\Omega$. 
For $\sigma\neq 0$, 
since the existence of the third term of \eqref{monolim} is not guaranteed (only
up to $\mathcal H^n$ a.e.), 
we cannot conclude such existence in general on $\partial\Omega$.  If we assume,
in addition to (A1) and (A2), that \eqref{fv1} is satisfied, then $B^+$ has a finite
perimeter as discussed in the definition. 
Then by De Giorgi's theorem (cf. \cite[Theorem 5.19]{EG} used for characteristic function) again, for $\mathcal H^{n-1}$ a.e.
$x\in \partial\Omega$, the limit of the third term exists and is equal to 
either $2\sigma$, $\sigma$ or 0. In particular, on $\partial^* B^+$,
it is $\sigma$ for $\mathcal H^{n-1}$ a.e. 
Thus, in this case, we have $\Theta^n(\|V\|,x)$ 
for $\mathcal H^{n-1}$ a.e. on $\partial\Omega$ instead. 

It is also interesting to pursue a boundary regularity theorem under a natural
``closeness to a single sheet'' assumption. Extrapolating from \cite{A,GJ}, one may
assume that $\|V\|(B_\rho(x))$ is close to $\omega_n\rho^n/2$. On the other hand, one
needs to differentiate two cases, the case away from ``$\partial B^+$'' where the right
angle condition should be satisfied, and the other case of near ``$\partial B^+$'' 
where the fixed angle condition of $\theta$ should be satisfied. Compared to
\cite{GJ}, it is less clear what should be the right assumption for the further 
regularity theorem. 


\end{document}